\theoremstyle{plain}
\newtheorem{theorem}{Theorem}
\newtheorem{corollary}[theorem]{Corollary}
\newtheorem{lemma}[theorem]{Lemma}
\newtheorem{proposition}[theorem]{Proposition}
\theoremstyle{definition}
\newtheorem{definition}[theorem]{Definition}
\newtheorem{remark}[theorem]{Remark}
\begin{document}
\baselineskip 18pt

\title[Unbounded Banach-Saks and unbounded Grothendieck operators ]
      {Unbounded Banach-Saks operators and unbounded Grothendieck operators on Banach lattices}

\author[O.~Zabeti]{Omid Zabeti}


\address[O.~Zabeti]
  {Department of Mathematics, Faculty of Mathematics, Statistics, and Computer science,
   University of Sistan and Baluchestan, Zahedan,
   P.O. Box 98135-674. Iran}
\email{o.zabeti@gmail.com}

\keywords{Unbounded Banach-Saks operators, unbounded Grothendieck operators, the Banach-Saks property, the Grothendieck property, Banach lattice.}
\subjclass[2010]{Primary: 46B42. Secondary: 47B65.}

\begin{abstract}
Suppose $E$ is a Banach lattice. Recently, there have been some motivating contexts regarding the known Banach-Saks property and the Grothendieck property from an order point of view. In this paper, we establish these results for operators that enjoy different types considered for the Banach-Saks property as well as for different notions related to the Grothendieck property. In particular, beside other results, we characterize order continuity and reflexivity of Banach lattices in terms of the corresponding bounded operators defined on them.
\end{abstract}

\date{\today}

\maketitle
\section{motivation and introduction}
Let us start with some motivation. Certainly, a Banach lattice is a Banach space with an order structure and of course with some suitable connections between ordered and analysis aspects. So, it is natural to expect some order aspects of Banach lattices and also operators between them. This line has been growing from the early stage of appearance of vector lattices (with the remarkable Riesz-Kantorovich formulae) until now, using different types of convergence with the aid of the order structure beside the analysis aspects, such as $uaw$-Dunford-Pettis ($M$-weakly compact) operators, unbounded continuous operators ($WM$-weakly compact operators) and so on. Recently, there have been some attention to the order structure for some notions that have been introduced and investigated initially in Banach spaces such as the Banach-Saks property and the Grothendieck property. Different types of the mentioned properties have been defined and some interesting results regarding characterizations of some known spaces in Banach lattice theory such as order continuity or reflexivity have been obtained (see \cite{AEH, DLS, EGZ, MFMA, Z2, Z1} for more details). In this paper, we consider operator versions regarding the different types considered for the Banach-Saks property and various concepts related to the Grothendieck property. As our main results, in both sections, we characterize order continuous Banach lattices and reflexive ones in terms of these different classes of bounded operators. All operators in this paper are assumed to be continuous. For a detailed exposition on Banach lattices and operators between them, as well as related topics, see \cite{AB, Ni, Z1}.
\section{Banach-Saks operators}
First, let us recall different types related to the Banach-Saks property for a Banach lattice. Suppose $E$ is a Banach lattice. $E$ is said to have the { unbounded Banach-Saks property} ({UBSP}, for short) if for every norm bounded $uaw$-null sequence $(x_n)\subseteq E$ (i.e. for each $u\in E_{+}$, $|x_n|\wedge u\xrightarrow{w}0$), there is a subsequence $(x_{n_k})$  whose Ces\`{a}ro means is convergent. Moreover, recall that $E$ possesses the disjoint Banach-Saks property ({DBSP}, for short) if every bounded disjoint sequence in $E$ has a Ces\`{a}ro convergent subsequence; $E$ has the disjoint weak Banach-Saks property ({DWBSP}, in brief) if every disjoint weakly null sequence in $E$ has a Ces\'{a}ro convergent subsequence. Furthermore, $E$ possesses the weak Banach-Saks property ({WBSP}, in brief) if for every weakly null sequence $(x_n)$, it has a subsequence which is Ces\`{a}ro convergent. Finally observe that $E$ possesses the Banach-Saks property ({BSP}) if every bounded sequence $(x_n)\subseteq E$ has a Ces\'{a}ro convergent subsequence. For a brief but comprehensive context in this subject, see \cite{GTX}. Also, for a detailed exposition on these notions, see \cite{Z1}.

Now, let us consider the operators versions of these types of the Banach-Saks properties.

\begin{definition}\label{31}
Suppose $E$ and $F$ are Banach lattices and $T:E\to F$ is a continuous operator. $T$ is said to be
\begin{itemize}
		\item[\em (i)] {the unbounded Banach-Saks operator ({\bf UBSO}, for short) if every  bounded $uaw$-null sequence $(x_n)\subseteq E$, $(T(x_n))$ has a Ces\`{a}ro convergent subsequence; that is, there exists a subsequence $(x_{n_k})$ of $(x_n)$ such that  $(\frac{1}{n}\Sigma_{k=1}^{n}T(x_{n_k}))$ is norm convergent.}
\item[\em (ii)]
	{The disjoint Banach-Saks operator ({\bf DBSO}, in notation) if every  bounded disjoint sequence $(x_n)\subseteq E$, $(T(x_n))$ has a  Ces\`{a}ro convergent subsequence.}
\item[\em (iii)]
	{The weak Banach-Saks operator ({\bf WBSO}, in notation) if every weakly null sequence $(x_n)\subseteq E$,  $(T(x_n))$ has a  Ces\`{a}ro convergent subsequence.}
\item[\em (iv)]	{ The disjoint weakly Banach-Saks operator ({\bf DWBSO}, in notation) if every disjoint weakly null sequence $(x_n)\subseteq E$, $(T(x_n))$ has a  Ces\`{a}ro convergent subsequence.}
		\item[\em (v)] {The Banach-Saks operator ({\bf BSO}, in brief) if every  bounded  sequence $(x_n)\subseteq E$, $(T(x_n))$ has a  Ces\`{a}ro convergent subsequence.}
		
			\end{itemize}
\end{definition}
The parts $(iii)$ and $(iv)$ were defined initially in the definition 6.16 from \cite{GTX}. Theses notions extend ideas considered in \cite[Section 4]{Z1} as a Banach lattice $E$ has the corresponding Banach-Saks property if and only if so is the identity operator on $E$. Moreover, note that while $E$ has the corresponding Banach-Saks property and $F$ is any Banach lattice, every continuous operator $T:E\to F$ possesses the Banach-Saks operator version, as described in Definition \ref{31}, respectively. Therefore, we conclude that these concepts are far from being equivalent, in general as described in \cite[Section 4]{Z1}.

Before anything, the following easy fact is handy and fruitful.
\begin{lemma}
Suppose $E$, $F$ and $G$ are Banach lattices and $T:E\to F$ and $S:F\to G$ are continuous operators. Then, we have the following facts.
\begin{itemize}
\item[\em (i)] {If $T$ is {\bf UBSO}, then so is $ST$}.
\item[\em (ii)]{If $T$ is {\bf DBSO}, then so is $ST$}.
\item[\em (iii)]{If $T$ is {\bf WBSO}, then so is $ST$}.
\item[\em (iv)]{If $T$ is {\bf DWBSO}, then so is $ST$}.
\item[\em (v)]{If $T$ is {\bf BSO}, then so is $ST$}.

\end{itemize}
\end{lemma}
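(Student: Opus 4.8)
The plan is to prove all five statements simultaneously, since they differ only in which class of sequences one feeds into the operator; the underlying mechanism is identical in each case. Fix a sequence $(x_n)\subseteq E$ of the relevant type — bounded $uaw$-null for (i), bounded disjoint for (ii), weakly null for (iii), disjoint weakly null for (iv), and merely bounded for (v). In every instance this is a property of $(x_n)$ as a sequence in $E$ alone, so the hypothesis on $T$ applies to it verbatim.

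Next I would invoke the assumption on $T$: there is a subsequence $(x_{n_k})$ of $(x_n)$ such that the Ces\`{a}ro means $\frac{1}{m}\Sigma_{k=1}^{m}T(x_{n_k})$ converge in norm in $F$, say to some $y\in F$. Now I push this through $S$. Since $S$ is linear, $S\left(\frac{1}{m}\Sigma_{k=1}^{m}T(x_{n_k})\right)=\frac{1}{m}\Sigma_{k=1}^{m}S(T(x_{n_k}))=\frac{1}{m}\Sigma_{k=1}^{m}(ST)(x_{n_k})$, and since $S$ is continuous this sequence converges in norm to $S(y)$ in $G$. Hence $\bigl((ST)(x_{n_k})\bigr)$ is Ces\`{a}ro convergent, which is precisely what is required for $ST$ to be the operator of the corresponding type. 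Note that the same subsequence that works for $T$ works for $ST$, so no further extraction is needed.

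There is essentially no obstacle here: the only ingredients are the linearity of $S$ (to commute it past a finite average) and its norm-continuity (to commute it past the limit), both of which are guaranteed by the standing assumption that all operators are continuous. It is worth remarking that the argument uses nothing about $E$, $F$, or $G$ as lattices, and nothing about the composition in the reverse order — it is simply the observation that the property ``$(R(x_n))$ has a Ces\`{a}ro convergent subsequence'' is preserved under post-composition of $R$ with a bounded operator.
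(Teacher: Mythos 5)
Your proof is correct and is exactly the argument the paper has in mind (the paper states this as an ``easy fact'' and omits the proof entirely): the same subsequence works, and one only needs linearity of $S$ to pass it through the finite averages and continuity of $S$ to pass it through the limit.
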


Now, we extend \cite[Lemma 36]{Z1} to the operator case; the proof has essentially the same idea.
\begin{lemma}\label{1}
\begin{itemize}
\item[\em (i)] {Suppose $E'$ is order continuous. Then for every Banach lattice $F$, every {\bf WBSO} $T:E\to F$ is {\bf UBSO}}.
\item[\em (ii)]{For any Banach lattices $E$ and $F$, every {\bf UBSO} $T:E\to F$ is also {\bf DBSO}}.
\item[\em (iii)] {Suppose $E$ is either an $AM$-space or an atomic order continuous Banach lattice. Then for every Banach lattice $F$, every {\bf UBSO} $T:E\to F$ is {\bf WBSO}}.
\end{itemize}
\end{lemma}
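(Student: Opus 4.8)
The plan is to prove each of the three implications by the same two–move argument: take the distinguished class of sequences appearing in the \emph{conclusion} (a norm bounded $uaw$-null sequence in (i), a norm bounded disjoint sequence in (ii), a weakly null sequence in (iii)), recognise it as a sequence of exactly the kind to which the \emph{hypothesis} on $T$ applies, and then quote that hypothesis. So $T$ itself is only used at the very last step, and the entire content sits in three comparisons between $uaw$-null, weakly null, and disjoint sequences — precisely the comparisons underlying the proof of \cite[Lemma 36]{Z1}, which I would cite or re-run.

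For (ii) the comparison I would use is: \emph{every norm bounded disjoint sequence in a Banach lattice is $uaw$-null}. To see this, fix a norm bounded disjoint $(x_n)\subseteq E$ and $u\in E_{+}$, and set $y_n:=|x_n|\wedge u$; then $0\le y_n\le u$ and $(y_n)$ is again disjoint, so for each $0\le\phi\in E'$ one has $\sum_{k=1}^{N}\phi(y_k)=\phi\bigl(\bigvee_{k=1}^{N}y_k\bigr)\le\phi(u)$ for all $N$, whence $\phi(y_n)\to 0$; decomposing an arbitrary functional into positive parts gives $y_n\xrightarrow{w}0$, i.e. $(x_n)$ is $uaw$-null. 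Since $T$ is \textbf{UBSO}, $(T(x_n))$ has a Ces\`{a}ro convergent subsequence, so $T$ is \textbf{DBSO}.

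For (i) and (iii) I would invoke the corresponding comparisons from \cite[Lemma 36]{Z1} and the $uaw$-literature: if $E'$ is order continuous then every norm bounded $uaw$-null sequence in $E$ is weakly null; and if $E$ is an $AM$-space or an atomic order continuous Banach lattice then every weakly null (hence automatically norm bounded) sequence in $E$ is $uaw$-null — in the atomic order continuous case because weak nullity forces coordinatewise nullity and an order bounded coordinatewise null sequence is weakly null there (dominated convergence against the coordinate expansion of the dominating element), and in the $AM$-space case because, realising $E$ as a closed sublattice of some $C(K)$, weak nullity there means norm boundedness together with pointwise nullity, a property inherited by $|x_n|\wedge u$. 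Granting these, (i) is immediate (a norm bounded $uaw$-null $(x_n)$ is weakly null, so \textbf{WBSO} of $T$ supplies the Ces\`{a}ro convergent subsequence, making $T$ \textbf{UBSO}) and so is (iii) (a weakly null $(x_n)$ is norm bounded $uaw$-null, so \textbf{UBSO} of $T$ supplies it, making $T$ \textbf{WBSO}).

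The step I expect to carry the weight is (i): one must be certain that order continuity of $E'$ genuinely upgrades $uaw$-nullity to weak nullity on norm bounded sets, and that this hypothesis is essential — for $E=\ell^{1}$ the dual $\ell^{\infty}$ is not order continuous and the disjoint unit vectors are norm bounded and $uaw$-null but not weakly null, so the identity on $\ell^{1}$ is \textbf{WBSO} but not \textbf{UBSO}. For (iii) the corresponding care is to verify the weak-to-$uaw$ comparison separately in each of the two listed classes of spaces, while (ii) has no real obstacle beyond the observation, already made above, that $|x_n|\wedge u$ inherits both disjointness and order boundedness from $(x_n)$.
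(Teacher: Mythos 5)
Your proof is correct and follows exactly the route the paper intends: the paper gives no written argument for this lemma, merely remarking that it has ``essentially the same idea'' as \cite[Lemma 36]{Z1}, namely the three sequence comparisons you spell out (bounded disjoint $\Rightarrow$ $uaw$-null always; bounded $uaw$-null $\Rightarrow$ weakly null when $E'$ is order continuous; weakly null $\Rightarrow$ $uaw$-null in $AM$-spaces and atomic order continuous lattices), after which the hypothesis on $T$ is quoted. Your direct verifications of these comparisons, including the telescoping-supremum argument for part (ii), are sound, so nothing further is needed.
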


For the converse, we have the following.
\begin{theorem}
Suppose $E$ is a Banach lattice. Then the following are equivalent.
\begin{itemize}
\item[\em (i)] { $E'$ is order continuous}.
\item[\em (ii)]{every {\bf WBSO} $T:E\to \ell_1$ is {\bf UBSO}}.
\end{itemize}
\end{theorem}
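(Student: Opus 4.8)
The implication $(i)\Rightarrow(ii)$ is immediate, being Lemma~\ref{1}(i) applied with $F=\ell_1$. It is also convenient to observe at the start that, since $\ell_1$ has the Schur property, \emph{every} continuous operator $T\colon E\to\ell_1$ is automatically {\bf WBSO}: a weakly null sequence is sent to a weakly null, hence norm null, hence Ces\`aro convergent sequence. Thus $(ii)$ is nothing but the assertion that every continuous $T\colon E\to\ell_1$ is {\bf UBSO}, and the whole content of the theorem lies in the contrapositive of $(ii)\Rightarrow(i)$: assuming $E'$ is not order continuous, one must exhibit a single continuous operator $T\colon E\to\ell_1$ that is not {\bf UBSO}.

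The plan is as follows. Since $E'$ is not order continuous, a classical characterization of order continuity of the dual norm (see, e.g., \cite{AB}; in the language of this paper this is the two-sided form of what underlies Lemma~\ref{1}(i)) furnishes a norm bounded disjoint sequence $(d_n)\subseteq E$ that is not weakly null. Passing to a subsequence and multiplying each $d_n$ by a suitable sign, I may assume there are $\psi\in E'_+$ and $\delta>0$ with $\psi(d_n)\ge\delta$ for all $n$, with $(d_n)$ still norm bounded and disjoint. The key step is then to disintegrate $\psi$ along the $d_n$: for each $n$ put $\psi_n(y):=\sup_k\psi\bigl(\abs{y}\wedge k\abs{d_n}\bigr)$ for $y\in E_+$, which (by the standard component construction in a Riesz space) defines a positive functional $0\le\psi_n\le\psi$. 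One then checks directly that $\psi_n(d_n)=\psi(d_n)\ge\delta$, that $\psi_n(d_m)=0$ for $m\ne n$ (because $\abs{d_m}\wedge\abs{d_n}=0$), and that the $\psi_n$ are pairwise disjoint, whence $\sum_{n=1}^{N}\psi_n=\bigvee_{n=1}^{N}\psi_n\le\psi$ for every $N$.

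With this done I would define $T\colon E\to\ell_1$ by $Tx=(\psi_n(x))_{n\ge1}$. The estimate $\sum_n\abs{\psi_n(x)}\le\sum_n\psi_n(\abs{x})\le\psi(\abs{x})\le\norm{\psi}\,\norm{x}$ shows that $T$ is well defined and bounded, and $T$ is {\bf WBSO} by the Schur observation. To see that $T$ is not {\bf UBSO}, note first that $(d_n)$ is norm bounded and $uaw$-null --- for each $u\in E_+$ the elements $\abs{d_n}\wedge u$ are disjoint and dominated by $u$, so $\sum_n f(\abs{d_n}\wedge u)\le f(u)<\infty$ and hence $\abs{d_n}\wedge u\xrightarrow{w}0$ for every $f\in E'_+$. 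On the other hand $Td_n=\psi_n(d_n)e_n=c_ne_n$ with $\delta\le c_n\le\norm{\psi}$, so along any subsequence $(d_{n_j})_j$ the Ces\`aro means $v_m=\frac1m\sum_{j=1}^{m}c_{n_j}e_{n_j}$ satisfy $\norm{v_{2m}-v_m}_{\ell_1}=\frac1{2m}\sum_{j=1}^{2m}c_{n_j}\ge\delta$ (the indices $n_1,\dots,n_{2m}$ being distinct); hence $(v_m)$ is not Cauchy and $(Td_n)$ has no Ces\`aro convergent subsequence. This contradicts $(ii)$, completing the contrapositive.

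The norm estimate for $T$, the $uaw$-nullity of $(d_n)$, and the Cauchy computation for the Ces\`aro means are all routine. The step that carries the weight is the disintegration of $\psi$: one needs the $\psi_n$ to be simultaneously \emph{strictly} biorthogonal to $(d_n)$ --- so that $Td_n$ is a scalar multiple of the unit vector $e_n$ and the images $Td_n$ are honestly disjoint in $\ell_1$ --- and uniformly absolutely summable against every $x\in E$ --- so that $T$ actually maps into $\ell_1$ --- and it is exactly the component construction in the Dedekind complete lattice $E'$ that reconciles these two demands. The only other point needing care is quoting the correct form of the criterion ``$E'$ order continuous $\iff$ every norm bounded disjoint sequence of $E$ is weakly null'', which is standard Banach lattice theory.
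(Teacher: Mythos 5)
Your proof is correct, but the hard direction $(ii)\Rightarrow(i)$ takes a genuinely different route from the paper. The paper's contrapositive argument is short and structural: if $E'$ is not order continuous, it invokes \cite[Proposition 2.3.11]{Ni} to obtain a positive projection $P:E\to\ell_1$ onto a lattice copy of $\ell_1$, notes that $P$ is automatically {\bf WBSO} by the Schur property (the same observation you make for arbitrary operators into $\ell_1$), and kills the {\bf UBSO} property using the standard basis $(e_n)$, which is disjoint and bounded, hence $uaw$-null by \cite[Lemma 2]{Z}, yet has no Ces\`aro convergent subsequence. You instead start from the weaker (and more elementary) characterization that $E'$ fails order continuity iff some norm bounded disjoint sequence $(d_n)\subseteq E$ is not weakly null, and you build the counterexample operator by hand, disintegrating a positive functional $\psi$ into the components $\psi_n$ along the bands generated by the $d_n$ and setting $Tx=(\psi_n(x))_n$. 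This avoids the complemented-copy theorem entirely (in effect you re-derive exactly the fragment of it that is needed), at the cost of carrying out the component construction and the estimate $\sum_{n\le N}\psi_n\le\psi$ yourself; the paper's proof is shorter but leans on a heavier quoted result. All the steps you flag as routine really are routine, and your Cauchy computation for the Ces\`aro means is the same one implicit in the paper's use of $(e_n)$. The only spot where your write-up is slightly too quick is the normalization: multiplying $d_n$ by signs gives a (possibly non-positive) $f\in E'$ with $f(d_n)\ge\delta$, but to get a \emph{positive} $\psi$ with $\psi(d_n)\ge\delta$ you should also replace $d_n$ by $|d_n|$ and $f$ by $|f|$, using $|f|(|d_n|)\ge|f(d_n)|$; since $(|d_n|)$ is still disjoint, bounded, and $uaw$-null, nothing else in the argument changes.
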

\begin{proof}
$(i)\to (ii)$ is trivial by Lemma \ref{1}. For the converse, suppose not. By \cite[Proposition 2.3.11]{Ni}, there exists a positive projection $P:E\to \ell_1$. $P$ is in fact a {\bf WBSO}; suppose $(x_n)$ is a weakly null sequence in $E$ so that $P(x_n)$ is weakly null in $\ell_1$. By the Schur property, it is norm null so that whose Ces\`{a}ro means is also norm null, as well. But $P$ is not an {\bf UBSO}. Suppose $(e_n)$ is the standard basis in $\ell_1$. It is disjoint in $\ell_1$ so that in $E$. By \cite[Lemma 2]{Z}, it is $uaw$-null. Nevertheless, it is easy to see that whose Ces\`{a}ro means is not convergent in $\ell_1$, as claimed.
\end{proof}

Moreover, for a converse of Lemma \ref{1}, part $(ii)$. we have the following.
\begin{proposition}\label{3}
Suppose $E$ is an order continuous Banach lattice and $F$ is any Banach lattice. Then, every {\bf DBSO} $T:E\to F$ is {\bf UBSO}.
\end{proposition}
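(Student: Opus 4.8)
The plan is to reduce the unbounded Banach-Saks condition to the disjoint one, using that over an order continuous $E$ a norm bounded $uaw$-null sequence is, up to a subsequence, a vanishing norm perturbation of a disjoint sequence. So I would start from a norm bounded $uaw$-null sequence $(x_n)\subseteq E$, and aim to produce a subsequence along which $(T(x_n))$ is Ces\`{a}ro convergent.

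The first step I would carry out is the disjointification. Since $E$ is order continuous, a standard disjointification argument for $uaw$-null sequences (see \cite{Z1}) provides a subsequence $(x_{n_k})$ and a disjoint sequence $(d_k)\subseteq E$ with $\norm{x_{n_k}-d_k}\to 0$. The sequence $(d_k)$ is automatically norm bounded, because $\norm{d_k}\le\norm{x_{n_k}}+\norm{x_{n_k}-d_k}\le\sup_n\norm{x_n}+\sup_k\norm{x_{n_k}-d_k}<\infty$. If the disjointification one has available is phrased only for positive sequences, I would apply it to $(\abs{x_n})$, which is norm bounded and $uaw$-null by the very definition of $uaw$-nullity, to obtain a positive disjoint sequence $(e_k)$ with $\norm{\abs{x_{n_k}}-e_k}\to 0$, and then put $d_k:=P_k x_{n_k}$, where $P_k$ is the band projection onto the band generated by $e_k$ (such projections exist since order continuous Banach lattices are Dedekind complete). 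The $d_k$ are disjoint because the corresponding bands are, and $\norm{x_{n_k}-d_k}=\norm{(I-P_k)x_{n_k}}\le\norm{\abs{x_{n_k}}-e_k}\to 0$.

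Next I would invoke the hypothesis: as $T$ is {\bf DBSO} and $(d_k)$ is a bounded disjoint sequence, there is a subsequence $(d_{k_j})$ with $\bigl(\frac{1}{m}\sum_{j=1}^{m}T(d_{k_j})\bigr)_m$ norm convergent, say to $y\in F$. Using
\[
\frac{1}{m}\sum_{j=1}^{m}T(x_{n_{k_j}})=\frac{1}{m}\sum_{j=1}^{m}T(d_{k_j})+\frac{1}{m}\sum_{j=1}^{m}T\bigl(x_{n_{k_j}}-d_{k_j}\bigr),
\]
the first summand converges to $y$, while the second has norm at most $\frac{\norm{T}}{m}\sum_{j=1}^{m}\norm{x_{n_{k_j}}-d_{k_j}}$, which tends to $0$ since $(\norm{x_{n_{k_j}}-d_{k_j}})_j$ is a subsequence of the null sequence $(\norm{x_{n_k}-d_k})_k$ and the Ces\`{a}ro means of a null sequence of scalars are null. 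Hence $\bigl(\frac{1}{m}\sum_{j=1}^{m}T(x_{n_{k_j}})\bigr)_m\to y$, so $(T(x_n))$ has a Ces\`{a}ro convergent subsequence and $T$ is {\bf UBSO}. The only genuinely non-formal ingredient is the disjointification step, and this is where care is needed: one must be sure the version invoked applies to norm bounded $uaw$-null sequences over an order continuous lattice and yields honest norm convergence $\norm{x_{n_k}-d_k}\to 0$ rather than merely $uaw$-convergence of the difference; granting that, everything else is routine.
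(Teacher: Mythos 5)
Your argument is correct and follows essentially the same route as the paper's proof: disjointify the norm bounded $uaw$-null sequence over the order continuous lattice $E$ into a disjoint sequence plus a norm-null perturbation, apply the {\bf DBSO} hypothesis to the disjoint part, and absorb the perturbation via the triangle inequality on the Ces\`{a}ro means. Your version is in fact slightly more careful than the paper's (which asserts the Ces\`{a}ro means of $T(d_i)$ tend to $0$ rather than merely converge), and your explicit check that $(d_k)$ is bounded and your band-projection fallback are sensible additions, but no new idea is involved.
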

\begin{proof}
Suppose $(x_n)$ is a norm bounded $uaw$-null sequence in $E$. By \cite[Theorem 4]{Z} and \cite[Theorem 3.2]{DOT}, there are a subsequence $(x_{n_k})$ of $(x_n)$ and a disjoint sequence $(d_k)$ such that $\|x_{n_k}-d_k\|\rightarrow 0$. By passing to a subsequence, we may assume that $\lim_{m\rightarrow \infty}\frac{1}{m}\Sigma_{i=1}^{m}T(d_{i})\rightarrow 0$. Now, the result follows from the following inequality.
\[\|\frac{1}{m}\Sigma_{i=1}^{m}T(x_{n_i})-\frac{1}{m}\Sigma_{i=1}^{m}T(d_i)\|\leq \frac{1}{m}\Sigma_{i=1}^{m}\|T\|\|x_{n_i}-d_i\|\rightarrow 0.\]
\end{proof}
\begin{remark}\label{32}
Note that order continuity is essential in Proposition \ref{3} and can not be omitted. Consider the identity operator $I$ on $\ell_{\infty}$; it is obviously {\bf DBSO} by \cite[Lemma 38]{Z1} but it fails to be {\bf UBSO}, the $uaw$-null sequence $(u_n)$ defined via $u_n=(0,\ldots,0,1,\ldots,1,0,\ldots)$, in which one is appeared $n$-times (from the $n$-th position until the $2n$-th position), does not have any Ces\`{a}ro convergent subsequence; although it is weakly null so that $uaw$-null by \cite[Theorem 7]{Z}, although it does have the {\bf DBSP}, certaily.
\end{remark}
For a sort of the converse of Proposition \ref{3}, we have the following partial results.
\begin{proposition}
Suppose $F$ is a Banach lattice. If every continuous operator $T:\ell_1\to F$ is {\bf UBSO} ({\bf DBSO}), then $F'$ is order continuous.
\end{proposition}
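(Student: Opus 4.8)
The plan is to prove the contrapositive, and to do so by producing a \emph{single} continuous operator that refutes both alternative hypotheses at once. So assume $F'$ is not order continuous. Since, by Lemma~\ref{1}(ii), every \textbf{UBSO} operator is \textbf{DBSO}, it is enough to exhibit a continuous $T\colon\ell_1\to F$ which fails to be \textbf{DBSO}; such a $T$ is then automatically not \textbf{UBSO}, so it contradicts whichever of the two hypotheses we had assumed.

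For the operator itself, I would use exactly the ingredient already used in the proof of the theorem above: because $F'$ is not order continuous, \cite[Proposition 2.3.11]{Ni} yields a positive projection $P\colon F\to\ell_1$, so that $\ell_1$ embeds in $F$ and the inclusion $\iota\colon\ell_1\hookrightarrow F$ satisfies $P\iota=\mathrm{id}_{\ell_1}$. I would set $T:=\iota$ and feed it the unit vector basis $(e_n)$ of $\ell_1$, which is norm bounded and disjoint (and, by \cite[Lemma 2]{Z}, also $uaw$-null, so it is simultaneously a legitimate test sequence for the \textbf{DBSO} and for the \textbf{UBSO} property). If $T$ were \textbf{DBSO}, some subsequence would satisfy $\frac1N\sum_{k=1}^{N}T(e_{n_k})\to y$ in $F$, and applying the continuous operator $P$ together with $P\iota=\mathrm{id}$ would give $\frac1N\sum_{k=1}^{N}e_{n_k}\to P(y)$ in $\ell_1$. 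This is impossible, because these Ces\`{a}ro means are not Cauchy: for $N<M$ a direct computation gives $\bignorm{\tfrac1N\sum_{k=1}^{N}e_{n_k}-\tfrac1M\sum_{k=1}^{M}e_{n_k}}=2-\tfrac{2N}{M}$, which is at least $1$ as soon as $M\ge 2N$. Hence $T$ is neither \textbf{DBSO} nor \textbf{UBSO}, contradicting the hypothesis, and the proposition follows.

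I do not anticipate any real obstacle: this is just the operator-level counterpart of the classical fact that a failure of order continuity of $F'$ forces a positively complemented copy of $\ell_1$ in $F$, and the computation runs along the same lines as the proof of the preceding theorem. The only points needing care are to quote the criterion for order continuity of $F'$ in the sharp form that provides a \emph{positive} projection onto $\ell_1$ (so that the pullback by $P$ is legitimate), and to note that the one sequence $(e_n)$ already discharges both clauses of the statement, being at the same time disjoint and $uaw$-null.
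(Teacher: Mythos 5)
Your proposal is correct and follows essentially the same route as the paper: non-order-continuity of $F'$ yields a lattice copy of $\ell_1$ in $F$, the inclusion serves as the counterexample operator, and the unit vector basis (disjoint and $uaw$-null) witnesses the failure of both \textbf{DBSO} and \textbf{UBSO}. Your version is in fact slightly more careful, since you use the positive projection $P$ to transport the hypothetical Ces\`{a}ro convergence back to $\ell_1$ and then verify non-convergence by an explicit norm computation, whereas the paper leaves this last step implicit via the lattice embedding being an isomorphism onto its range.
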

\begin{proof}
First note that since the domain $\ell_1$ has an order continuous norm, {\bf UBSO} is equivalent by {\bf DBSO} by Proposition \ref{3} and Lemma \ref{1}. Suppose not. By \cite[Theorem 4.71]{AB}, $E$ has a lattice copy of $\ell_1$. Consider the embedding map from $\ell_1$ into $F$; the standard disjoint sequence $(e_n)$ is $uaw$-null in both $\ell_1$ and $E$ by \cite[Lemma 2]{Z}, however, the Ces\`{a}ro means is not convergent for any subsequence.
\end{proof}
\begin{proposition}
Suppose $F$ is a $\sigma$-order complete Banach lattice. If every continuous operator $T:\ell_{\infty}\to F$ is {\bf UBSO}, then $F$ is order continuous.
\end{proposition}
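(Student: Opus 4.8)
The plan is to argue by contraposition: assuming $F$ is not order continuous, I will produce a continuous operator $T:\ell_\infty\to F$ that is not \textbf{UBSO}. First I would invoke, since $F$ is $\sigma$-order complete, the classical fact that a $\sigma$-Dedekind complete Banach lattice which is not order continuous contains a sublattice lattice-isomorphic to $\ell_\infty$ (see \cite{Ni}); this provides a lattice homomorphism $J:\ell_\infty\to F$ that is an isomorphism onto its closed range, say $c\norm{y}_\infty\le\norm{Jy}\le C\norm{y}_\infty$ for all $y\in\ell_\infty$, with $0<c\le C$. The operator I will use is simply $T:=J$, so the whole matter reduces to producing one well-chosen test sequence in $\ell_\infty$.

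The key input is that $\ell_\infty$ fails the weak Banach-Saks property. I would obtain this from the fact that $\ell_\infty$ is isometrically universal for separable Banach spaces, hence contains an isometric copy of the Schreier space $\mathcal S$: its unit vector basis $(e_n)$ is weakly null, while a one-line estimate on the Schreier norm (using $1$-unconditionality together with the admissible set $\{n_{m+1},\dots,n_{2m}\}$) yields $\bignorm{\tfrac1m\sum_{k=1}^m e_{n_k}-\tfrac1{2m}\sum_{k=1}^{2m}e_{n_k}}\ge\tfrac12$ for every subsequence $(e_{n_k})$, so no subsequence of $(e_n)$ has norm-Cauchy Ces\`aro means. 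Since the weak Banach-Saks property clearly passes to closed subspaces, $\ell_\infty$ lacks it. I would then take the image $(x_n)\subseteq\ell_\infty$ of $(e_n)$ under such an embedding: it is weakly null with no Ces\`aro convergent subsequence, and, being norm bounded, it is $uaw$-null in $\ell_\infty$ by \cite[Theorem 7]{Z}.

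Finally I would transfer this through $J$. As $J$ is bounded and linear, $(Jx_n)$ is weakly null in $F$, hence $uaw$-null in $F$ by \cite[Theorem 7]{Z}, and it is norm bounded. For any subsequence, $\tfrac1m\sum_{k=1}^m Jx_{n_k}=J\!\bigl(\tfrac1m\sum_{k=1}^m x_{n_k}\bigr)$, and the two-sided estimate for $J$ shows that $\bigl(\tfrac1m\sum_{k=1}^m Jx_{n_k}\bigr)_m$ is norm-Cauchy in $F$ precisely when $\bigl(\tfrac1m\sum_{k=1}^m x_{n_k}\bigr)_m$ is norm-Cauchy in $\ell_\infty$, which by construction it is not. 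Hence $(Jx_n)$ is a norm bounded $uaw$-null sequence in $F$ whose image under $T=J$ has no Ces\`aro convergent subsequence, so $T$ is not \textbf{UBSO}; this contradicts the hypothesis, forcing $F$ to be order continuous. The point requiring real care is the middle step: one cannot use the order-theoretically obvious $uaw$-null sequences of $\ell_\infty$ here, since bounded disjoint sequences are already Ces\`aro null in $\ell_\infty$, and ``interval-type'' $uaw$-null sequences become disjoint -- hence Ces\`aro null -- after passing to a rapidly increasing subsequence, so a genuinely non-Banach-Saks weakly null sequence, such as the Schreier basis supplies, is really needed.
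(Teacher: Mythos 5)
Your argument is correct, and at the top level it runs parallel to the paper's: use $\sigma$-order completeness together with the failure of order continuity to lattice-embed $\ell_\infty$ into $F$ (the paper cites \cite[Theorem 4.56]{AB}, you cite \cite{Ni}; same fact), and then exhibit one bounded $uaw$-null sequence in $\ell_\infty$ whose image under the embedding has no Ces\`{a}ro convergent subsequence. Where you genuinely diverge is in the choice of that witness sequence, and the divergence matters. The paper delegates the witness to Remark \ref{32}, i.e.\ to the indicator vectors $u_n$ of the blocks $[n,2n]$, claimed there to have no Ces\`{a}ro convergent subsequence. As you point out, that claim is false: a rapidly increasing subsequence such as $(u_{3^k})$ is pairwise disjoint, and for disjoint $0$--$1$ vectors in $\ell_\infty$ the Ces\`{a}ro means have norm $\frac1m\to 0$ (this is exactly why $\ell_\infty$ has the disjoint Banach--Saks property). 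So the paper's proof, as written, breaks precisely at the point where you put the real work. Your replacement witness --- the image in $\ell_\infty$ of the Schreier unit vector basis, which is weakly null while $\bignorm{\frac1m\sum_{k=1}^{m}e_{n_k}-\frac1{2m}\sum_{k=1}^{2m}e_{n_k}}\ge\frac12$ for every subsequence --- is a correct repair: it is bounded and weakly null in $\ell_\infty$, hence $uaw$-null there (for bounded sequences in $\ell_\infty$ weak and $uaw$ convergence agree; this is the same use of \cite[Theorem 7]{Z} that the paper itself makes in Remark \ref{32}), and the two-sided estimate for the lattice embedding $J$ transfers the failure of Ces\`{a}ro convergence into $F$. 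In short: same skeleton, but your proof actually closes a gap in the paper's.

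One caveat about your final paragraph. You assert that $(Jx_n)$ is $uaw$-null in $F$ because it is weakly null there, again citing \cite[Theorem 7]{Z}. In a general Banach lattice weak nullity does not imply $uaw$-nullity (the Rademacher sequence in $L_2[0,1]$ is weakly null but $|r_n|\wedge\one=\one$), so that theorem cannot be applied to $F$. The step is both repairable and, more importantly, unnecessary. It is repairable because $J$ is a lattice homomorphism: $|x_n|\xrightarrow{w}0$ in $\ell_\infty$ gives $|Jx_n|=J|x_n|\xrightarrow{w}0$ in $F$, and then $0\le |Jx_n|\wedge u\le |Jx_n|$ yields $uaw$-nullity by testing against positive functionals. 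It is unnecessary because the definition of \textbf{UBSO} for $T:\ell_\infty\to F$ only quantifies over sequences that are bounded and $uaw$-null in the domain $\ell_\infty$, which you had already verified for $(x_n)$; nothing about the image sequence's $uaw$-behaviour in $F$ is needed to conclude that $T=J$ fails to be \textbf{UBSO}.
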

\begin{proof}
 Suppose not. By \cite[Theorem 4.56]{AB}, $E$ has a lattice copy of $\ell_{\infty}$. Consider the embedding map from $\ell_{\infty}$ into $F$; now, consider Remark \ref{32} to derive the desired result.
\end{proof}
Now, we consider some ideal properties for these classes of operators. Recall that a continuous operator $T:E\to F$, where $E$ and $F$ are Banach lattices, is said to be unbounded continuous provided that it maps every bounded $uaw$-null sequence $(x_n)\subseteq E$, to a weakly null sequence; $T$ is $uaw$-continuous if it maps norm bounded $uaw$-null sequences into $uaw$-null ones. For more details, see \cite{Z1}.
\begin{proposition}\label{2}
 Let $E,F$ and $G$ be Banach lattices. Then the following assertions hold.
	\begin{itemize}
		\item[\em (i)] {If $T:F\to G$ is a {\bf DBSO} and $S:E\to F$ is disjoint-preserving then $TS$ is {\bf DBSO}}.
		\item[\em (ii)] {If $T:F\to G$ is an {\bf UBSO} operator and $S:E\to F$ is $uaw$-continuous, then $TS$ is {\bf UBSO}}.
		\item[\em (iii)]
	{ If $T:F\to G$ is {\bf WBSO} and $S:E\to F$ is an unbounded continuous operator, then $TS$ is also {\bf UBSO}}.
	\end{itemize}
\end{proposition}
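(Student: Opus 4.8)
The plan is to treat all three parts by the same simple mechanism: the auxiliary operator $S$ carries the relevant class of sequences in $E$ into precisely the class of sequences in $F$ that the hypothesis on $T$ is able to handle, and then the conclusion drops out. The one standing remark I would make first is that any continuous (bounded) operator sends norm bounded sequences to norm bounded sequences; this is needed whenever the target operator class ({\bf DBSO} or {\bf UBSO}) demands norm boundedness of its input.

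For $(i)$, I would start with a norm bounded disjoint sequence $(x_n)\subseteq E$. Since $S$ is disjoint-preserving, $|x_i|\wedge|x_j|=0$ gives $|S(x_i)|\wedge|S(x_j)|=0$, so $(S(x_n))$ is a disjoint sequence in $F$, and it is norm bounded by continuity of $S$. Because $T$ is {\bf DBSO}, the sequence $(TS(x_n))$ has a Ces\`{a}ro convergent subsequence, which is exactly the assertion that $TS$ is {\bf DBSO}. For $(ii)$, I would take a norm bounded $uaw$-null sequence $(x_n)\subseteq E$; since $S$ is $uaw$-continuous it maps this to a $uaw$-null sequence $(S(x_n))$ in $F$, again norm bounded by continuity of $S$, and then $T$ being {\bf UBSO} yields a Ces\`{a}ro convergent subsequence of $(TS(x_n))$, so $TS$ is {\bf UBSO}. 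For $(iii)$, I would again take a norm bounded $uaw$-null sequence $(x_n)\subseteq E$; since $S$ is unbounded continuous, $(S(x_n))$ is weakly null in $F$, and then $T$ being {\bf WBSO} gives a Ces\`{a}ro convergent subsequence of $(TS(x_n))$, so $TS$ is {\bf UBSO}.

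The hard part will be essentially nonexistent: the content is bookkeeping, matching the mapping property of $S$ to the input requirement of $T$. The only two points I would be careful to state explicitly are that "disjoint-preserving" really does take a pairwise disjoint sequence to a pairwise disjoint sequence, and that norm boundedness (part of the input hypotheses for the {\bf DBSO} and {\bf UBSO} target classes) is automatically preserved by the bounded operator $S$, so no separate argument is needed there.
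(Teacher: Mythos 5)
Your proof is correct and follows exactly the paper's argument: in each part, push the test sequence through $S$, observe that it lands in the class of sequences that $T$ can handle, and conclude. The extra care you take about norm boundedness being preserved and disjointness genuinely carrying over is a harmless (and welcome) elaboration of the same proof.
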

\begin{proof}
$(i)$. Suppose $(x_n)$ is a norm bounded disjoint sequence in $E$. Therefore, by the assumption, $(S(x_n))$ is also bounded and disjoint in $F$. Therefore, $(TS(x_n))$ has a Ces\`{a}ro convergent subsequence.

$(ii)$. Suppose $(x_n)$ is a norm bounded $uaw$-null sequence in $E$. Therefore, $S(x_n)\xrightarrow{uaw}0$. Thus, $(TS(x_n))$ has a Ces\`{a}ro convergent subsequence.

$(iii)$. Suppose $(x_n)$ is a norm bounded $uaw$-null sequence in $E$. By the assumption, $S(x_n)\xrightarrow{w}0$ so that $(TS(x_n))$ has a Ces\`{a}ro convergent subsequence.
\end{proof}
\begin{theorem}
Suppose $E$ is a Banach lattice such that both $E$ and $E'$ have order continuous norms and $F$ is any Banach lattice. Then each {\bf DWBSO} $T:E\to F$ is {\bf UBSO}.
\end{theorem}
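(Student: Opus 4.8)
The plan is to follow the proof of Proposition~\ref{3} almost verbatim, the role of the extra hypothesis ``$E'$ order continuous'' being to upgrade the disjoint sequence produced by disjointification from merely bounded-and-disjoint to weakly null, which is precisely the class of sequences a {\bf DWBSO} operator is assumed to control.

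So let $T\colon E\to F$ be {\bf DWBSO} and let $(x_n)$ be a norm bounded $uaw$-null sequence in $E$. Since $E$ is order continuous, \cite[Theorem 4]{Z} together with \cite[Theorem 3.2]{DOT} furnish a subsequence $(x_{n_k})$ of $(x_n)$ and a disjoint sequence $(d_k)$ in $E$ with $\|x_{n_k}-d_k\|\to 0$, exactly as in Proposition~\ref{3}; in particular $(d_k)$ is norm bounded. Next I would invoke the classical characterization that, for a Banach lattice $E$, the dual $E'$ has order continuous norm if and only if every norm bounded disjoint sequence in $E$ is weakly null (see \cite{AB, Ni}; if one prefers the version phrased for positive sequences, decompose $d_k=d_k^{+}-d_k^{-}$ and apply it to the bounded disjoint positive sequences $(d_k^{+})$ and $(d_k^{-})$). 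Hence $(d_k)$ is a disjoint weakly null sequence in $E$.

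Since $T$ is {\bf DWBSO}, $(T(d_k))$ admits a Ces\`{a}ro convergent subsequence, so passing to it we get a further subsequence $(d_{k_j})$ for which $\frac1m\sum_{j=1}^{m}T(d_{k_j})$ is norm convergent as $m\to\infty$ (in fact to $0$, since $T$ is weak-to-weak continuous, whence $(T(d_{k_j}))$ is weakly null and the Ces\`{a}ro means of a weakly null sequence are weakly null). For the matching subsequence $(x_{n_{k_j}})$ of $(x_n)$ the estimate
\[
\Bignorm{\frac1m\sum_{j=1}^{m}T(x_{n_{k_j}})-\frac1m\sum_{j=1}^{m}T(d_{k_j})}\le\frac{\|T\|}{m}\sum_{j=1}^{m}\|x_{n_{k_j}}-d_{k_j}\|\longrightarrow 0
\]
--- the Ces\`{a}ro means of the null sequence $(\|x_{n_k}-d_k\|)$ --- then shows that $\frac1m\sum_{j=1}^{m}T(x_{n_{k_j}})$ is norm convergent, so $(T(x_n))$ has a Ces\`{a}ro convergent subsequence and $T$ is {\bf UBSO}. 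I expect the only point needing care to be pinning down the correct classical fact used in the second paragraph; the rest is exactly the disjointification-plus-perturbation argument already in Proposition~\ref{3}, and the genuinely new input is simply that order continuity of $E'$ turns the perturbing disjoint sequence into a weakly null one.
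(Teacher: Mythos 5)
Your proof is correct and follows essentially the same route as the paper: disjointify via \cite[Theorem 4]{Z} and \cite[Theorem 3.2]{DOT}, use order continuity of $E'$ to see that the bounded disjoint sequence $(d_k)$ is weakly null, apply the {\bf DWBSO} hypothesis to it, and transfer back by the Ces\`{a}ro perturbation estimate. The only (immaterial) difference is how weak nullity of $(d_k)$ is obtained: the paper passes through $uaw$-convergence (\cite[Lemma 2]{Z} plus \cite[Theorem 7]{Z}), while you invoke the equivalent classical characterization of order continuity of $E'$ by disjoint sequences.
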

\begin{proof}
Suppose $T:E\to F$ is {\bf DWBSO} and $(x_n)$ is a norm bounded $uaw$-null sequence in $E$. By \cite[Theorem 4]{Z} and \cite[Theorem 3.2]{DOT}, there are a subsequence $(x_{n_k})$ of $(x_n)$ and a disjoint sequence $(d_k)$ such that $\|x_{n_k}-d_k\|\rightarrow 0$. By \cite[Lemma 2]{Z}, $x_n\xrightarrow{uaw}0$ so that $x_n\xrightarrow{w}0$ by \cite[Theorem 7]{Z}. By passing to a subsequence and by the assumption, we may assume that $\lim_{m\rightarrow \infty}\frac{1}{m}\Sigma_{i=1}^{m}T(d_{i})\rightarrow 0$. Now, the result follows from the following inequality.
\[\|\frac{1}{m}\Sigma_{i=1}^{m}T(x_{n_i})-\frac{1}{m}\Sigma_{i=1}^{m}T(d_i)\|\leq \frac{1}{m}\Sigma_{i=1}^{m}\|T\|\|x_{n_i}-d_i\|\rightarrow 0.\]
\end{proof}
For the converse, we have the following fact.
\begin{theorem}
Suppose $E$ is a $\sigma$-order complete Banach lattice for that every {\bf DWBSO} $T:E\to \ell_{\infty}$ is {\bf UBSO}. Then, both $E$ and $E'$ possess order continuous norms.
\end{theorem}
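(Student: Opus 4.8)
The plan is to prove the contrapositive, splitting the conclusion into the two assertions ``$E$ is order continuous'' and ``$E'$ is order continuous'' and, in each case, producing a continuous operator $T:E\to\ell_\infty$ which is {\bf DWBSO} but not {\bf UBSO}. The workhorse in both cases is the sequence $(u_n)$ of Remark \ref{32}: it lies in the unit sphere of $\ell_\infty$, it is weakly (hence, by \cite[Theorem 7]{Z}, $uaw$-)null, and no subsequence of it has norm convergent Ces\`{a}ro means.

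For order continuity of $E$, suppose not. Since $E$ is $\sigma$-order complete, \cite[Theorem 4.56]{AB} provides a lattice embedding $i:\ell_\infty\to E$. Then $i$ is {\bf DBSO}, hence {\bf DWBSO}: a bounded disjoint sequence in $\ell_\infty$ has, by \cite[Lemma 38]{Z1}, a subsequence with norm convergent Ces\`{a}ro means, and continuity of $i$ preserves this. But $i$ is not {\bf UBSO}: applied to $(u_n)$, if some subsequence of $(i(u_n))=(u_n)$ had norm convergent Ces\`{a}ro means in $E$, then, $i$ being a linear homeomorphism onto its closed range, the corresponding Ces\`{a}ro means of $(u_n)$ would converge in $\ell_\infty$, against Remark \ref{32}. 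This contradicts the hypothesis, so $E$ is order continuous.

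For order continuity of $E'$, suppose not. By \cite[Proposition 2.3.11]{Ni} there are a positive projection $P:E\to\ell_1$ and a lattice embedding $J:\ell_1\to E$ with $PJ=\mathrm{id}_{\ell_1}$. Define $S:\ell_1\to\ell_\infty$ by $S((a_n)_n)=\sum_n a_n u_n$; since $\|u_n\|_\infty=1$ we have $\|S((a_n)_n)\|_\infty\le\sum_n|a_n|$, so $S$ is a well-defined bounded operator with $S(e_n)=u_n$. Put $T=SP:E\to\ell_\infty$. If $(y_k)\subseteq E$ is disjoint and weakly null, then $(P(y_k))$ is weakly null in $\ell_1$, hence norm null by the Schur property, so $(T(y_k))$ is norm null and in particular has norm convergent Ces\`{a}ro means; thus $T$ is {\bf DWBSO}. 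On the other hand $x_n:=J(e_n)$ is a bounded disjoint sequence in $E$, hence $uaw$-null by \cite[Lemma 2]{Z}, while $T(x_n)=S(e_n)=u_n$ has no Ces\`{a}ro convergent subsequence; so $T$ is not {\bf UBSO}, contradicting the hypothesis. Hence $E'$ is order continuous, completing the proof. I expect the only nontrivial point to be picking the right ingredients — the sublattice copy of $\ell_\infty$ inside $E$ for the first part and the positive projection onto a sublattice $\ell_1$ for the second; once these are in hand the verifications are routine, the cleanest being that composing with a projection onto $\ell_1$ makes {\bf DWBSO} come for free from the Schur property, while the disjoint $\ell_1$-basis pushed forward to $(u_n)$ defeats {\bf UBSO}.
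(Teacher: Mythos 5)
The first half of your argument does not engage the hypothesis: the operator you construct there points the wrong way. The theorem quantifies over operators $T:E\to\ell_\infty$, whereas the lattice embedding $i:\ell_\infty\to E$ furnished by \cite[Theorem 4.56]{AB} maps \emph{into} $E$. Showing that $i$ is {\bf DWBSO} but not {\bf UBSO} therefore contradicts nothing, and order continuity of $E$ is not established. You need a map \emph{out of} $E$; the paper manufactures one by taking a positive projection $Q$ of $E$ onto a lattice copy of $c_0$ (via \cite[Theorem 2.4.12]{Ni}) and composing with the inclusion $c_0\hookrightarrow\ell_\infty$, using that $c_0$ has the {\bf DWBSP}. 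Your embedding cannot simply be reversed: the copy of $\ell_\infty$ inside $E$ is complemented because $\ell_\infty$ is injective, but the resulting projection $E\to\ell_\infty$ has no reason to be {\bf DWBSO}, since $\ell_\infty$ fails the weak Banach--Saks property. So this half is a genuine gap, not a cosmetic one.

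Your second half has the same skeleton as the paper's (contrapositive, positive projection onto $\ell_1$, Schur property to get {\bf DWBSO} for free) and in fact repairs a defect in the paper's own argument: the paper composes $P$ with the inclusion $\iota_1:\ell_1\to\ell_\infty$ and offers $(e_n)$ as the witness, but in $\ell_\infty$ one has $\|\frac{1}{m}\sum_{k=1}^{m}e_{n_k}\|_\infty=\frac{1}{m}\to 0$, so $(e_n)$ witnesses nothing; your factor $S:e_n\mapsto u_n$ is exactly the right device for transporting a genuinely bad sequence into $\ell_\infty$. Unfortunately the property of $(u_n)$ that you import from Remark \ref{32} is itself false as stated: choosing $n_{k+1}>2n_k$ makes the vectors $u_{n_k}$ pairwise disjoint, whence $\|\frac{1}{m}\sum_{k=1}^{m}u_{n_k}\|_\infty=\frac{1}{m}\to 0$ and this subsequence \emph{is} Ces\`{a}ro convergent. (This is forced: $(u_n)$ is weakly null in $c_0$, which has the weak Banach--Saks property.) Hence $SP$ is not shown to fail {\bf UBSO} either, and both halves ultimately need a weakly null sequence in $\ell_\infty$ that genuinely admits no Ces\`{a}ro convergent subsequence, which neither $(e_n)$ nor $(u_n)$ provides.
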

\begin{proof}
Suppose not. Observe that by \cite[Proposition 2.3.11]{Ni} and \cite[Theorem 2.4.12]{Ni}, there is a  positive projection $P$ from $E$ onto $\ell_1$ and a positive projection $Q$ from $E$ onto $c_0$. Moreover, assume that $\iota_1$ and $\iota_2$ are the inclusion maps from $\ell_1$ and $c_0$ into $\ell_\infty$, respectively. Consider operators $\iota_1 P$ and $\iota_2 Q$. Both of them are {\bf DWBSO}. For, assume that $(x_n)$ is a disjoint weakly null sequence in $E$ so that $P(x_n)$ is disjoint and norm null in $\ell_1$ (because of Schur property) and so in $\ell_{\infty}$. Therefore, by the assumption, $\iota_1 P(x_n)$ has a Ces\`{a}ro convergent subsequence. Moreover, note that $c_0$ possesses the {\bf DWBSP}; thus, we conclude that $\iota_2 Q(x_n)$ also possesses a Ces\`{a}ro convergent means. Nevertheless, neither $\iota_1 P$ nor $\iota_2 Q$ are {\bf UBSO}. For the former case, consider the standard sequence $(e_n)\subseteq \ell_1$ and for the latter case, consider $(u_n)\subseteq \ell_{\infty}$ considered in Remark \ref{32}. This would complete the proof.
\end{proof}
\section{The Grothendieck operators}
Recall that a Banach space $X$ has the Grothendieck property if for every sequence $({x'}_n)\subseteq X'$, we have ${x'}_n\xrightarrow{w^{*}}0$ implies that ${x'}_n\xrightarrow{w}0$. It is known that every reflexive space has the Grothendieck property. Moreover, in the setting of Banach lattice theory, every $\sigma$-order complete $AM$-space with unit has this property, as well (see \cite[Theorem 4.44]{AB}). A  Banach lattice $E$ is said to have the disjoint Grothendieck property if for every norm bounded disjoint sequence $({x'}_{n})\subseteq E'$, we have ${x'}_n\xrightarrow{w}0$. Now, we define an unbounded version of this property as follows. Suppose $E$ is a Banach lattice. $E$ is said to have the {\bf unbounded Grothendieck property} if for every bounded sequence $({x_n}')\subseteq E'$, ${x_n}'\xrightarrow{uaw^{*}}0$ implies that ${x_n}'\xrightarrow{w}0$. It is easy to see that every reflexive space possesses the unbounded Grothendieck property, as well. In spite of \cite[Theorem 4.44]{AB}, $\ell_{\infty}$ does not have the unbounded Grothendieck property; see \cite[Lemma 3]{Z2}. Moreover, we mention the main result from \cite{Z2} as follows.
\begin{theorem}
A Banach lattice $E$ is reflexive if and only if it possesses the unbounded Grothendieck property.
\end{theorem}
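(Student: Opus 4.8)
The plan is to handle the two implications by rather different arguments.

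\emph{Sufficiency of reflexivity.} Assume $E$ is reflexive. Then $E'$ is reflexive, so the weak and weak${}^{*}$ topologies of $E'$ agree; in particular a norm bounded sequence $(x_n')\subseteq E'$ with $x_n'\xrightarrow{uaw^{*}}0$ is in fact $uaw$-null in $E'$. Being reflexive, $E'$ has order continuous norm, so by \cite[Theorem 4]{Z} and \cite[Theorem 3.2]{DOT} some subsequence satisfies $\|x_{n_k}'-d_k\|\to0$ for a disjoint $(d_k)\subseteq E'$. Since $E$ is reflexive, $E''$ has order continuous norm, and hence every norm bounded disjoint sequence in $E'$ is weakly null \cite{AB}; thus $d_k\xrightarrow{w}0$ and $x_{n_k}'\xrightarrow{w}0$. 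As this applies inside an arbitrary subsequence of $(x_n')$, we conclude $x_n'\xrightarrow{w}0$, i.e. $E$ has the unbounded Grothendieck property.

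\emph{Necessity.} I would prove the contrapositive: if $E$ is not reflexive — so $E'$ is not reflexive either — then $E$ fails the unbounded Grothendieck property, the witnessing sequence being produced inside $E'$. Two facts about the Dedekind complete lattice $E'$ serve as the engine. (a) Every norm bounded disjoint sequence $(f_n)\subseteq E'$ is $uaw^{*}$-null: for $u'\in E'_{+}$ disjointness yields $\sum_{k\le n}(|f_k|\wedge u')=\bigl(\sum_{k\le n}|f_k|\bigr)\wedge u'\le u'$, so $|f_n|\wedge u'$ is dominated by a sequence decreasing to $0$, hence order null, hence $w^{*}$-null (here one uses that the canonical copy of $E$ in $E''$ consists of order continuous functionals on $E'$). (b) More generally, any order null sequence in $E'$ is $uaw^{*}$-null. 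Now distinguish two cases according to order continuity of $E'$.

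If $E'$ is not order continuous, then by the sequential characterization of order continuity there is an order bounded disjoint sequence $(z_n)\subseteq E'_{+}$ with $\inf_n\|z_n\|=\delta>0$; put $g_n:=\sup_{k\ge n}z_k$, which exists by Dedekind completeness, so that $g_n\downarrow0$ (the disjoint order bounded series $\sum_k z_k$ being order convergent) while $\|g_n\|\ge\delta$ and $(g_n)$ stays norm bounded. By (b), $(g_n)$ is $uaw^{*}$-null; it is not weakly null, for the norm of the positive element $g_n$ is approximated by positive functionals of $E''$, so one may choose $\Psi_n\in E''_{+}$ with $\|\Psi_n\|\le1$ and $\Psi_n(g_n)\ge\delta/2$, and any weak${}^{*}$ cluster point $\Psi\in E''_{+}$ of $(\Psi_n)$ satisfies $\Psi(g_m)\ge\delta/2$ for every $m$, because $g_m\ge g_n\ge0$ forces $\Psi_n(g_m)\ge\Psi_n(g_n)$ whenever $n\ge m$. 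If instead $E'$ is order continuous, then, being a Dedekind complete order continuous Banach lattice, it is a $KB$-space, so it contains no copy of $c_0$; being non-reflexive, $E'$ must then contain a closed sublattice lattice isomorphic to $\ell_1$ \cite{AB}, and the positive disjoint basis $(\psi_n)$ of that sublattice is $uaw^{*}$-null by (a) but, being equivalent to the $\ell_1$-basis, is not weakly null in the sublattice (Schur property), hence not weakly null in $E'$. Either way, $E$ fails the unbounded Grothendieck property.

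The step I expect to require the most care is the non-weak-nullity argument in the first case: it hinges on the interplay of $uaw^{*}$-, $w^{*}$- and order convergence in $E'$, and on the fact that the canonical image of $E$ in $E''$ lies in the order continuous dual of $E'$ — which is exactly what turns order convergence of $(g_n)$ into $w^{*}$-convergence and makes the positive cluster point $\Psi$ available. The structural ingredients — the sequential characterization of order continuity, ``Dedekind complete $+$ order continuous $\Rightarrow KB$-space'', and ``a non-reflexive Banach lattice contains a sublattice isomorphic to $c_0$ or $\ell_1$'' — are all classical.
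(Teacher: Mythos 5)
Your argument is essentially correct, but note first that the paper offers no proof of this statement at all: it is quoted as the main result of \cite{Z2}, so your write-up is necessarily a different, self-contained route. The proof in \cite{Z2} (whose flavour can be read off from the later theorem here on operators $\ell_1\to F$) runs through lattice copies of $c_0$ and $\ell_1$ inside $E$ itself and exhibits explicit $uaw^{*}$-null, non-weakly-null sequences in $\ell_\infty$ and $\ell_1$ viewed inside $E'$. You instead work entirely in the dual: for sufficiency you combine reflexivity of $E'$ with the almost-disjointification of bounded $uaw$-null sequences (\cite{Z}, \cite{DOT}) and the fact that order continuity of $E''$ forces bounded disjoint sequences of $E'$ to be weakly null; for necessity you split on whether $E'$ is order continuous, producing in the first case the decreasing order-null (hence $uaw^{*}$-null) sequence $g_n=\sup_{k\ge n}z_k$ with norms bounded below, and in the second case the disjoint basis of a lattice copy of $\ell_1$ in $E'$. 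The individual steps check out: $g_n\downarrow 0$ does follow from disjointness (any lower bound of $(g_n)$ is disjoint from every $z_k$, hence from $g_1$, hence zero); the positive functionals $\Psi_n$ exist because the norm of a positive element is attained on the positive part of the dual ball; and the weak$^{*}$ cluster point argument correctly rules out weak nullity --- though here a decreasing positive weakly null sequence is already norm null by Dini's theorem \cite[Theorem 3.52]{AB}, which would shorten that step considerably.

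The one justification you should repair is the claim that a Dedekind complete Banach lattice with order continuous norm is a $KB$-space: this is false in general, $c_0$ being Dedekind complete and order continuous but not $KB$. What saves the step is that $E'$ is a \emph{dual} Banach lattice, and for dual Banach lattices order continuity of the norm is indeed equivalent to being a $KB$-space, equivalently to containing no isomorphic copy of $c_0$ (a classical result recorded in the $KB$-space section of \cite{AB}). With that citation substituted, Case 2, and hence the whole proof, is sound.
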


Now, consider the following definitions regarding operators that enjoy different types considered for the Grothendieck property.

\begin{definition}
Suppose $E$ and $F$ are Banach lattices and $T:E\to F$ is a continuous operator. $T$ is said to be:
\begin{itemize}
		\item[\em (i)] {the Grothendieck operator ({\bf GO}, for short) if every  bounded $w^{*}$-null sequence $({x}'_n)\subseteq F'$, $T'({x}'_n)\xrightarrow{w}0$.}
\item[\em (ii)]
	{ The disjoint Grothendieck operator ({\bf DGO}, in notation) if every  bounded disjoint sequence $({x}'_n)\subseteq F'$, $T'({x}'_n)\xrightarrow{w}0$.}
\item[\em (iii)]
	{The unbounded Grothendieck operator ({\bf UGO}, in notation) if every unbounded $weak^*$-null sequence $({x}'_n)\subseteq F'$,  $T'({x}'_n)\xrightarrow{w}0$.}

			\end{itemize}
\end{definition}
The first part of Definition is initially considered in \cite{DLS}. Furthermore, some different aspects of the Grothendieck property with emphasis on order structure has been studied recently in \cite{Z2}. Now, we consider some relations between these classes of operators.
\begin{proposition}\label{13}
Suppose $E$ and $F$ are Banach lattices. Then we have the following statements.
\begin{itemize}
		\item[\em (i)] {Suppose $F$ has the {\bf GP}. Then every continuous operator $T:E\to F$ is a {\bf GO}.}
\item[\em (ii)]
	{ Suppose $E$ has the {\bf GP} and a continuous operator $T:E\to F$ is a {\bf GO}. Then $F$ has the {\bf GP}, as well.}

				\end{itemize}
\end{proposition}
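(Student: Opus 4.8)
Both parts rest on a single soft fact: the adjoint $T'\colon F'\to E'$ of a bounded operator between Banach spaces is continuous for the respective weak topologies, hence sends weakly null sequences to weakly null sequences.

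For (i), let $(x'_n)\subseteq F'$ be bounded and $w^{*}$-null. Since $F$ has the Grothendieck property, $x'_n\xrightarrow{w}0$ in $F'$, and then weak-to-weak continuity of $T'$ yields $T'(x'_n)\xrightarrow{w}0$ in $E'$; thus $T$ is a {\bf GO}. No order structure is used.

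For (ii) I would argue as follows, assuming in addition that $T$ is onto $F$; some surjectivity hypothesis is unavoidable, since otherwise the zero operator $\ell_{\infty}\to c_{0}$ would be a {\bf GO} with $\ell_{\infty}$ Grothendieck but $c_{0}$ not. Let $(y'_n)\subseteq F'$ be bounded and $w^{*}$-null. First, $(T'y'_n)$ is bounded and $w^{*}$-null in $E'$, because $\langle T'y'_n,x\rangle=\langle y'_n,Tx\rangle\to0$ for every $x\in E$; since $E$ has the Grothendieck property this gives $T'y'_n\xrightarrow{w}0$ in $E'$ (which is precisely the statement that $T$ is a {\bf GO}). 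It remains to transfer weak nullity back to $(y'_n)$. Here I would invoke the open mapping theorem: a surjective bounded operator is a quotient map, so its adjoint $T'$ is bounded below, i.e. an isomorphism of $F'$ onto the closed subspace $T'(F')\subseteq E'$. The weak topology of $T'(F')$ is the trace of that of $E'$, and $(T')^{-1}\colon T'(F')\to F'$ is bounded, hence weak-to-weak continuous; therefore $y'_n=(T')^{-1}(T'y'_n)\xrightarrow{w}0$ in $F'$, so $F$ has the Grothendieck property.

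The only step carrying any content is the last one — recovering weak nullity of $(y'_n)$ from that of $(T'y'_n)$ — and this is exactly where surjectivity of $T$, through the open mapping theorem, is indispensable; the remainder is formal bookkeeping with adjoints and weak topologies. In this light, part (ii) is the Banach-lattice counterpart of the classical fact that a quotient of a space with the Grothendieck property again has it.
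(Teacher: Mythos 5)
Part (i) of your proposal is exactly the paper's argument: the Grothendieck property of $F$ upgrades $w^{*}$-nullity of $(x'_n)$ to weak nullity, and weak-to-weak continuity of $T'$ finishes it. Nothing to add there.

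For part (ii) you have put your finger on a genuine defect in the statement and, as it turns out, in the paper's own proof. As written, with no surjectivity hypothesis, (ii) is false: your counterexample (the zero operator from $\ell_{\infty}$ onto $\{0\}\subseteq c_0$, with $\ell_\infty$ Grothendieck, $c_0$ not, and $T'=0$ trivially making $T$ a {\bf GO}) is correct, since the coordinate functionals $e_n\in\ell_1=c_0'$ are $w^{*}$-null but, by the Schur property, not weakly null. The paper's proof of (ii) takes a $w^{*}$-null $(y'_n)\subseteq F'$, notes $T'(y'_n)\xrightarrow{w^{*}}0$ in $E'$ and invokes the Grothendieck property of $E$ to get $T'(y'_n)\xrightarrow{w}0$ --- but this only re-establishes that $T$ is a {\bf GO} (which was hypothesized) and never returns to $(y'_n)$ itself, so the stated conclusion that $F$ has the {\bf GP} is not actually reached. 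Your repaired version, assuming $T$ onto, is the right fix: the open mapping theorem makes $T'$ an isomorphism onto its closed range, the weak topology of that range is the trace of the weak topology of $E'$, and $(T')^{-1}$ is weak-to-weak continuous, so $y'_n\xrightarrow{w}0$ in $F'$. This is the classical fact that quotients of Grothendieck spaces are Grothendieck. One further observation: your own first step shows that when $E$ has the {\bf GP}, \emph{every} continuous $T:E\to F$ is automatically a {\bf GO}, so in the corrected statement the hypothesis that $T$ is a {\bf GO} is redundant; the operative hypotheses are the Grothendieck property of $E$ and the surjectivity of $T$.
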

\begin{proof}
$(i)$. Suppose $({y}'_n)\subseteq F'$ is a $w^*$-null sequence in $F'$. By the assumption, ${y}'_n\xrightarrow{w}0$. This implies that $T'({y}'_n)\xrightarrow{w}0$ by \cite[Theorem 5.22]{AB}.

$(ii)$. Suppose $({y}'_n)\subseteq F'$ is a $w^*$-null sequence in $F'$. By the assumption, $T'({y}'_n)\xrightarrow{w^*}0$ so that $T'({y}'_n)\xrightarrow{w}0$ by the Grothendieck property of $E$.
\end{proof}

By using \cite[Lemma 2]{Z}, we conclude that every {\bf UGO} is {\bf DGO}. For a sort of the converse, we have the following result. Just note that {\bf UGP} is equivalent to the reflexivity; before that, we need the following fact.
\begin{proposition}\label{21}
Suppose $E$ is a Banach lattice and $F$ is an order continuous Banach lattice. Then, every {\bf GO} $T:E\to F$ is {\bf UGO}.
\end{proposition}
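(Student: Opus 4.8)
The entire statement reduces to a single fact about order continuous lattices, which I would isolate at the outset: \emph{if $F$ is order continuous, then every norm bounded $uaw^*$-null sequence in $F'$ is in fact $w^*$-null.} Granting this, the proposition is immediate. Given a norm bounded $uaw^*$-null sequence $(x_n')\subseteq F'$, the fact makes it $w^*$-null; since $T$ is a {\bf GO}, $T'(x_n')\xrightarrow{w}0$ in $E'$; and as $(x_n')$ was an arbitrary unbounded $w^*$-null sequence, $T$ is a {\bf UGO}. So all the content is in the isolated fact.

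To prove the fact I would first reduce to positive sequences. If $(x_n')$ is norm bounded and $uaw^*$-null then so is $(|x_n'|)$ — the two satisfy literally the same condition $|x_n'|\wedge u'\xrightarrow{w^*}0$ for all $u'\in F'_+$ — and once we know $|x_n'|\xrightarrow{w^*}0$, then $|\langle x_n',y\rangle|\le\langle|x_n'|,|y|\rangle\to 0$ for every $y\in F$, so $x_n'\xrightarrow{w^*}0$ as well. Hence it suffices to show that a norm bounded sequence $(x_n')\subseteq F'_+$ with $x_n'\wedge u'\xrightarrow{w^*}0$ for every $u'\in F'_+$ is $w^*$-null. (Note that the converse implication is false in general — e.g. the Rademacher sequence in $L^\infty=(L^1)'$ is $w^*$-null but not $uaw^*$-null — so this is genuinely a one-directional phenomenon.)

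For a fixed $y\in F_+$ and $\varepsilon>0$ one writes $\langle x_n',y\rangle=\langle x_n'\wedge u',y\rangle+\langle(x_n'-u')^+,y\rangle$ for $u'\in F'_+$; the first summand tends to $0$ by hypothesis, so the whole point is to use the order continuity of $F$ to produce one functional $u'\in F'_+$ with $\sup_n\langle(x_n'-u')^+,y\rangle\le\varepsilon$, after which letting $\varepsilon\downarrow 0$ finishes the argument. This step — equivalently, that norm boundedness together with $uaw^*$-nullity forces $w^*$-nullity in an order continuous lattice — is the genuine obstacle; everything around it is formal. It is a known consequence of order continuity built from the $uaw^*$-machinery (see \cite{Z2}), so in practice I would cite it directly; a self-contained derivation would go through the disjointification results \cite[Theorem~4]{Z}, \cite[Theorem~3.2]{DOT} carried out inside $F$ together with the Riesz--Kantorovich description of $x_n'\wedge u'$, but that is the technical heart and I would not reproduce it here.
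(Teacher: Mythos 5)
Your proof is correct and takes essentially the same route as the paper: the paper's entire argument is precisely the fact you isolate — that in the dual of an order continuous Banach lattice every norm bounded $uaw^{*}$-null sequence is $w^{*}$-null — which it cites as \cite[Proposition 8]{Z} (you point to \cite{Z2}; the sharper reference is \cite[Proposition 8]{Z}), followed by the one-line application of the {\bf GO} hypothesis. Your additional sketch of how that fact might be proved is superfluous here, since the paper, like you, simply invokes it.
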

\begin{proof}
$(i)$. Suppose $({x'}_n)$ is a norm bounded $uaw^{*}$-null sequence in $F'$. By \cite[Proposition 8]{Z}, ${x'}_n\xrightarrow{w^{*}}0$ so that $T'({x'}_n)\xrightarrow{w}0$ in $E'$ by the assumption.
\end{proof}
\begin{theorem}
A Banach lattice $F$ has the {\bf UGP} if and only if every continuous operator $T:\ell_1\to F$ is an {\bf UGO}.
\end{theorem}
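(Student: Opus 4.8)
The plan is to prove $(i)\Rightarrow(ii)$ directly and $(ii)\Rightarrow(i)$ by contraposition, the crux being a single application of Goldstine's theorem. For $(i)\Rightarrow(ii)$, assume $F$ has the \textbf{UGP} and let $T:\ell_1\to F$ be continuous; if $(x'_n)\subseteq F'$ is norm bounded and $uaw^{*}$-null, then $x'_n\xrightarrow{w}0$ in $F'$ by the \textbf{UGP}, and since the adjoint $T':F'\to(\ell_1)'=\ell_\infty$ is weak-to-weak continuous, $T'(x'_n)\xrightarrow{w}0$, so $T$ is an \textbf{UGO}. (This half uses nothing about $\ell_1$, only the \textbf{UGP} of the codomain.)

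For $(ii)\Rightarrow(i)$, suppose $F$ is \emph{not} reflexive. Since the \textbf{UGP} is equivalent to reflexivity, $F$ fails the \textbf{UGP}, so there is a norm bounded $uaw^{*}$-null sequence $(x'_n)\subseteq F'$ that is \emph{not} weakly null. A routine extraction yields a subsequence (still denoted $(x'_n)$), a functional $x''\in F''$ and a number $\varepsilon>0$ with $\abs{x''(x'_n)}\ge\varepsilon$ for all $n$; after replacing each $x'_n$ by $\pm x'_n$ (harmless: $uaw^{*}$-nullity depends only on the absolute values of the terms) and rescaling $x''$, we may assume $\norm{x''}\le 1$, $\norm{x'_n}\le 1$ and $x''(x'_n)\ge\varepsilon$ for every $n$. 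The key step is to produce one bounded sequence in $F$ that ``sees'' all the $x'_n$ in the limit: for each $k$, the set $\{z''\in B_{F''}:\langle z'',x'_n\rangle>\varepsilon/2\ \text{for}\ 1\le n\le k\}$ is nonempty (it contains $x''$) and relatively $w^{*}$-open in $B_{F''}$, so by Goldstine's theorem it meets the canonical image of $B_F$; choose $y_k\in B_F$ with $\langle x'_n,y_k\rangle>\varepsilon/2$ for all $n\le k$.

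Now let $T:\ell_1\to F$ be the operator with $T(e_k)=y_k$; since $(y_k)\subseteq B_F$ this is a well-defined contraction. For each $n$, the element $T'(x'_n)\in(\ell_1)'=\ell_\infty$ is the sequence $(\langle x'_n,y_k\rangle)_k$, which by construction is $\ge\varepsilon/2$ in every coordinate $k\ge n$. Fix a positive norm-one functional $\phi$ on $\ell_\infty$ that vanishes on $c_0$ and satisfies $\phi(\one)=1$ (for instance a Banach limit, obtained from Hahn--Banach). Decompose $T'(x'_n)$ into its restriction to the coordinates $k<n$, which lies in $c_0$ and is killed by $\phi$, plus its restriction to the coordinates $k\ge n$, which dominates $\varepsilon/2$ times the indicator of $\{k:k\ge n\}$; using $\phi\ge 0$ and $\phi(\one)=1$, this gives $\phi(T'(x'_n))\ge\varepsilon/2$ for all $n$. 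Thus $T'(x'_n)$ does not converge weakly to $0$ in $\ell_\infty$, so $T$ is \emph{not} an \textbf{UGO}, contradicting $(ii)$. Therefore $F$ is reflexive, i.e.\ has the \textbf{UGP}.

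The genuinely delicate point is the construction of the $y_k$: there is no reason for $x''$ itself to be a $w^{*}$-limit of a \emph{sequence} from $F$, so instead of approximating $x''$ one only arranges, coordinate by coordinate, that $\langle x'_n,y_k\rangle$ be bounded below once $k\ge n$ — and this already defeats weak nullity in $\ell_\infty$, since weak convergence to $0$ there is obstructed by a single $c_0$-annihilating state such as a Banach limit. Everything else (Goldstine's theorem, the identification of $T'$ with the pairing matrix, and the elementary estimate with $\phi$) is routine.
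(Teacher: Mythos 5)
Your proof is correct, and it takes a genuinely different route from the paper's on both halves. For the forward direction the paper does not argue from the definition: it invokes the equivalence of the \textbf{UGP} with reflexivity, deduces that $F$ has the Grothendieck property and an order continuous norm, and then chains two earlier propositions (every operator into a \textbf{GP} space is a \textbf{GO}; every \textbf{GO} into an order continuous lattice is a \textbf{UGO}). Your one-line argument --- \textbf{UGP} of $F$ plus weak-to-weak continuity of $T'$ --- is more elementary and makes transparent that $\ell_1$ plays no role in this implication. For the converse the paper again passes through reflexivity and then uses the lattice dichotomy (a non-reflexive Banach lattice contains a lattice copy of $c_0$ or of $\ell_1$, \cite[Theorem 4.71]{AB}), producing explicit counterexample sequences in each case; your argument instead starts directly from a bounded $uaw^{*}$-null, non-weakly-null sequence in $F'$ witnessing the failure of the \textbf{UGP}, and manufactures the bad operator $T:\ell_1\to F$ by hand via Goldstine's theorem and a Banach limit. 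This buys two things: it avoids the reflexivity theorem and the lattice-copy dichotomy entirely (the only lattice-theoretic input is the observation that $uaw^{*}$-nullity survives sign changes, needed to normalize $x''(x'_n)\ge\varepsilon$), and it repairs a gap in the paper's own converse, where the operator $T:\ell_1\to F$ in the $c_0$ case is never actually exhibited. All the individual steps check out: the set $\{z''\in B_{F''}:\langle z'',x'_n\rangle>\varepsilon/2,\ n\le k\}$ is indeed a nonempty relatively $w^{*}$-open subset of $B_{F''}$, so Goldstine supplies the $y_k$; the subsequence you extract is still a legitimate witness against the \textbf{UGO} property; and the Banach-limit estimate $\phi(T'(x'_n))\ge\varepsilon/2$ is sound.
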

\begin{proof}
Suppose $F$ has the {\bf UGP} and $T:\ell_1\to F$ is a continuous operator. By \cite[Theorem 4]{Z2}, $F$ is reflexive so that it has the {\bf GP}. By Proposition \ref{21}, $T$ is an {\bf UGO}. For the converse, assume that not. Therefore, by \cite[Theorem 4.71]{AB}, $F$ contains a lattice copy of either $\ell_1$ or $c_0$. For the former case, consider the lattice embedding from $\ell_1$ into $F$. We conclude that $\ell_{\infty}$ is also linearly homeomorphic to $F'$. Consider the sequence $(u_n)\subseteq \ell_{\infty}$ defined via $u_n=(0,\ldots,0,1,\ldots)$ in which zero is appeared $n$-times. $u_n\xrightarrow{uaw^{*}}0$ but it is not weakly null by the Dini's theorem, \cite[Theorem 3.52]{AB}. For the latter case, since $c_0$ is linearly homeomorphic to $F$, we see that $\ell_1$ is linearly homeomorphic to $F'$. Note that the standard basic sequence $(e_n)$ is $uaw^{*}$-null in $\ell_1$; nevertheless, it can not be weakly null, certainly.

\end{proof}
Now, we show some ideal properties for the unbounded Grothendieck operators.
\begin{proposition}
Assume that $E$, $F$ and $G$ are Banach lattices. Then, we have the following observations.
\begin{itemize}
		\item[\em (i)]
{If $T:F\to G$ is an {\bf UGO} and $S:E\to F$ is a continuous operator. Then, $TS:E\to G$ is also {\bf UGO}.}
		\item[\em (ii)]
{Suppose $T:E\to F$ is an {\bf UGO} and $S:F\to G$ is interval-preserving. Then, $ST$ is also {\bf UGO} }

\end{itemize}
\end{proposition}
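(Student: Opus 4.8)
The plan is to treat both parts uniformly through the adjoint operator, using only the defining property of {\bf UGO} and the fact that the adjoint of a bounded operator is weak-to-weak continuous; part (ii) needs in addition a lemma to the effect that the adjoint of an interval-preserving operator carries bounded $uaw^{*}$-null sequences to bounded $uaw^{*}$-null sequences, and that is where the real work lies.

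For (i) I would argue as follows. The operator $TS\colon E\to G$ has adjoint $(TS)'=S'T'\colon G'\to F'\to E'$. Let $(x'_n)\subseteq G'$ be a bounded $uaw^{*}$-null sequence. Since $T$ is {\bf UGO}, $T'(x'_n)\xrightarrow{w}0$ in $F'$; since $S'$ is bounded, hence weak-to-weak continuous, $S'(T'(x'_n))\xrightarrow{w}0$ in $E'$. As $(TS)'(x'_n)=S'(T'(x'_n))$, this says exactly that $TS$ is {\bf UGO}. Nothing more is needed for (i).

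For (ii) the adjoint is $(ST)'=T'S'\colon G'\to F'\to E'$. Fix a bounded $uaw^{*}$-null sequence $(x'_n)\subseteq G'$; then $(S'(x'_n))$ is bounded in $F'$, and once we know it is also $uaw^{*}$-null the {\bf UGO} property of $T$ gives $T'(S'(x'_n))\xrightarrow{w}0$, i.e.\ $ST$ is {\bf UGO}. To see that $(S'(x'_n))$ is $uaw^{*}$-null: since $S$ is interval-preserving its adjoint $S'$ is a lattice homomorphism (the standard duality between interval-preserving operators and lattice homomorphisms, cf.\ \cite{AB}), and being an adjoint $S'$ is $w^{*}$-$w^{*}$ continuous; hence for each $u'\in G'_{+}$, $|S'(x'_n)|\wedge S'(u')=S'(|x'_n|\wedge u')\xrightarrow{w^{*}}0$ in $F'$. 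It remains to pass from the cone $S'(G'_{+})$ to an arbitrary $v'\in F'_{+}$. For this I would use Dedekind completeness of the dual lattice $F'$: split $F'=B\oplus B^{d}$ along the band $B$ generated by $S'(G')$; since $S'(|x'_n|)\in B$, one has $|S'(x'_n)|\wedge v'=|S'(x'_n)|\wedge v'_{B}$ where $v'_{B}$ is the $B$-component of $v'$; writing $v'_{B}=\sup_{u'\in G'_{+}}\bigl(v'_{B}\wedge S'(u')\bigr)$ as an increasing net $w'_{\beta}\uparrow v'_{B}$ with $w'_{\beta}=v'_{B}\wedge S'(u'_{\beta})\le S'(u'_{\beta})$, and using the elementary inequality $|S'(x'_n)|\wedge v'_{B}\le S'(|x'_n|\wedge u'_{\beta})+(v'_{B}-w'_{\beta})$, one tests against an arbitrary $y\in F_{+}$: for fixed $\beta$ the first summand $w^{*}$-converges to $0$ as $n\to\infty$, while $\langle v'_{B}-w'_{\beta},y\rangle\downarrow 0$ as $\beta$ increases because the canonical image of $F$ in $F''$ consists of order continuous functionals on $F'$. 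This forces $|S'(x'_n)|\wedge v'\xrightarrow{w^{*}}0$, so $S'(x'_n)\xrightarrow{uaw^{*}}0$, completing (ii).

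The only genuinely nontrivial ingredient is the claim singled out in (ii) — that an interval-preserving operator has a $uaw^{*}$-continuous adjoint on bounded sets. If this is already recorded in \cite{Z2} (or among the ambient facts used earlier in the paper) the proof of (ii) collapses to a citation; otherwise the band-splitting argument above supplies it, relying only on Dedekind completeness of $F'$ and on the order continuity of the evaluation functionals induced by the predual. Everything else is routine adjoint bookkeeping, and I expect that bookkeeping, rather than any deeper idea, to be the bulk of the written proof.
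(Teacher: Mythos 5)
Your proposal is correct and follows essentially the same route as the paper: part (i) is the identical adjoint bookkeeping $(TS)'=S'T'$ plus weak-to-weak continuity of $S'$, and part (ii) rests, as in the paper, on the fact that $S'$ is a lattice homomorphism because $S$ is interval-preserving (\cite[Theorem 2.19]{AB}). The only difference is that the paper simply asserts that this forces $S'({x}'_n)\xrightarrow{uaw^{*}}0$, whereas you correctly observe that the lattice-homomorphism property only controls $|S'(x'_n)|\wedge v'$ for $v'$ in the image cone $S'(G'_{+})$, and you close that gap with the band-splitting argument in $F'$ (using Dedekind completeness of the dual and order continuity of the evaluation functionals coming from $F$); this supplies a justification the paper leaves implicit.
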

\begin{proof}
$(i)$. Suppose $({x}'_n)$ is a bounded $uaw^*$-null sequence in $G'$. Since $(TS)'=S' T'$, we conclude that $T'({x}'_n)\xrightarrow{w}0$ so that $S' T'({x}'_n)\xrightarrow{w}0$.

$(ii)$. Suppose $({x}'_n)$ is a bounded $uaw^*$-null sequence in $G'$. Since $S$ is interval-preserving, by \cite[Theorem 2.19]{AB}, $S'$ is lattice homomorphism. So, $S'({x}'_n)\xrightarrow{uaw^*}0$. Therefore, $(ST)'({x}'_n)=T'(S'({x}'_n))\xrightarrow{w}0$.
\end{proof}

Now, we have the following fact.
\begin{corollary}\label{41}
A $\sigma$-order complete Banach lattice $F$ is order continuous if and only if every {\bf GO} $T:\ell_{\infty}\to F$ is {\bf UGO}.
\end{corollary}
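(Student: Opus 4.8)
I would prove the two implications separately. The forward one --- if $F$ is order continuous then every {\bf GO} $T\colon\ell_{\infty}\to F$ is {\bf UGO} --- is immediate from Proposition~\ref{21} taken with $E=\ell_{\infty}$, and in fact does not even use $\sigma$-order completeness.

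For the converse I would argue by contraposition: assuming $F$ is not order continuous, I exhibit a {\bf GO} $T\colon\ell_{\infty}\to F$ that fails to be {\bf UGO}. Since $F$ is $\sigma$-order complete and not order continuous, \cite[Theorem 4.56]{AB} gives a sublattice of $F$ lattice isomorphic to $\ell_{\infty}$; such a copy comes from a disjoint, order-bounded sequence $(x_k)\subseteq F_{+}$ with $\inf_k\|x_k\|>0$, and combining, for each $k$, a rescaled positive norming functional for $x_k$ with the band projection of $F$ onto the principal band generated by $x_k$ (which exists because $F$ is $\sigma$-order complete) produces a positive, \emph{interval preserving} projection of $F$ onto this copy. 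Write $\iota\colon\ell_{\infty}\to F$ for the lattice embedding and $P\colon F\to\ell_{\infty}$ for the projection, so $P\iota=\mathrm{id}_{\ell_{\infty}}$ and hence $\iota'P'=\mathrm{id}_{(\ell_{\infty})'}$. I claim $\iota$ is the desired operator.

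That $\iota$ is a {\bf GO} is the easy half: $\ell_{\infty}$ is a $\sigma$-order complete $AM$-space with a unit, so it has the Grothendieck property by \cite[Theorem 4.44]{AB}, and therefore for bounded $({x}'_n)\subseteq F'$ with ${x}'_n\xrightarrow{w^{*}}0$ one has $\iota'({x}'_n)\xrightarrow{w^{*}}0$ in $(\ell_{\infty})'$ (adjoints are weak$^{*}$-to-weak$^{*}$ continuous), whence $\iota'({x}'_n)\xrightarrow{w}0$. That $\iota$ is not a {\bf UGO} I would witness by the sequence ${x}'_n:=P'(e_n)\in F'$, where $(e_n)$ is the standard disjoint sequence of the band $\ell_{1}\subseteq(\ell_{\infty})'$. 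Indeed $({x}'_n)$ is bounded ($\|{x}'_n\|\le\|P\|$), and since $P$ is interval preserving its adjoint $P'$ is a lattice homomorphism by \cite[Theorem 2.19]{AB}, so $({x}'_n)$ is a bounded disjoint sequence in $F'$, hence $uaw$-null by \cite[Lemma 2]{Z} and a fortiori $uaw^{*}$-null; yet $\iota'({x}'_n)=\iota'P'(e_n)=e_n$, which is not weakly null in $(\ell_{\infty})'$ because the element $\one=(1,1,\ldots)$ of $\ell_{\infty}\subseteq(\ell_{\infty})''$ pairs with every $e_n$ to give $1$. Thus $\iota$ is a {\bf GO} but not a {\bf UGO}, contradicting the hypothesis, so $F$ must be order continuous.

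The only genuinely non-routine step is the construction of the positive interval preserving projection onto the $\ell_{\infty}$-copy --- this is precisely where $\sigma$-order completeness is used --- and once that is in hand everything else reduces to bookkeeping with adjoints together with \cite[Theorem 2.19]{AB} and \cite[Lemma 2]{Z}. I do not expect any other obstacle.
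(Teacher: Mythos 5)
Your proof is correct, and its skeleton coincides with the paper's: the forward implication via Proposition~\ref{21} (equivalently \cite[Proposition 8]{Z}), and the converse by embedding $\ell_{\infty}$ lattice-isomorphically into the non-order-continuous $F$, noting that the embedding $\iota$ is a \textbf{GO} because $\ell_{\infty}$ has the Grothendieck property, and then defeating \textbf{UGO} with a bounded disjoint (hence $uaw$-null, hence $uaw^{*}$-null) sequence in $F'$ whose image under $\iota'$ is not weakly null. The one place you diverge is in how that disjoint dual sequence is manufactured. The paper gets it for free from \cite[Lemma 3.4]{AEH}, which, applied to the disjoint sequence $\iota(e_n)\subseteq F_{+}$, directly yields a bounded disjoint sequence $(g_n)\subseteq F'$ with $g_n(\iota(e_n))=1$; no projection onto the $\ell_{\infty}$-copy is ever constructed. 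You instead build a positive interval-preserving projection $P$ by hand from band projections and positive norming functionals and transport the coordinate functionals via $P'$. Your construction does go through (the $\sigma$-order completeness gives the principal projection property and the order limits needed to verify $P[0,y]=[0,Py]$), and it yields the slightly cleaner identity $\iota'P'(e_n)=e_n$ together with an explicit witness $\one\in\ell_{\infty}\subseteq(\ell_{\infty})''$ for non-weak-nullity, which is more transparent than the paper's terse ``since $\ell_1\subseteq{\ell_{\infty}}'$''. Two small remarks: you do not actually need the full interval-preserving property of $P$ --- the functionals $P'(e_n)=g_n\circ B_n$ are carried by the mutually disjoint bands $B_n$, so their pairwise disjointness in $F'$ can be checked directly, shortening your ``non-routine step''; and the citation discrepancy (\cite[Theorem 4.56]{AB} versus the paper's Theorem 4.51) is immaterial, as both are used in the paper for exactly this purpose.
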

\begin{proof}
The direct implication follows from \cite[Proposition 8]{Z} since ${\ell_{\infty}}'$ is order continuous. For the other side, suppose not. By \cite[Theorem 4.51]{AB}, there exists a lattice embedding from $\ell_{\infty}$ into $F$. it is {\bf GO}; For, assume that $({x'}_n)$ is a $w^*$-null sequence in $F'$ so that in ${\ell_{\infty}}'$. Since $\ell_{\infty}$ possesses the {\bf GP}, we conclude that ${x'}_n\xrightarrow{w}0$, as claimed. Nevertheless, it can not be an {\bf UGO}. For, suppose $(e_n)$ is the standard basis of $\ell_{\infty}$ which is also disjoint in $F$. By \cite[Lemma 3.4]{AEH}, there exists a disjoint bounded sequence $(g_n)\subseteq F'$ such that $g_n(e_n)=1$. By \cite[Lemma 2]{Z}, $g_n\xrightarrow{uaw}0$ so that $g_n\xrightarrow{uaw^{*}}0$. But, it can not be weakly null in ${\ell_{\infty}}'$ since $\ell_1\subseteq {\ell_{\infty}}'$.
\end{proof}
Note that for Banach lattices $E$ and $F$, every {\bf UGO} $T:E\to F$ is {\bf DGO} by \cite[Lemma 2]{Z}. Apply this with Proposition \ref{21} and Corollary \ref{41}, we conclude the following result.
\begin{corollary}
Suppose $E$ is a Banach lattice and $F$ is a $\sigma$-order complete Banach lattice. Then, we have the following facts.
\begin{itemize}
		\item[\em (i)]
{If $F$ is order continuous, then every {\bf GO} $T:E\to F$ is an {\bf DGO}.}
		\item[\em (ii)]
{If every {\bf GO} $T:\ell_{\infty}\to F$ is an {\bf DGO}, then $F$ is order continuous.}

\end{itemize}
\end{corollary}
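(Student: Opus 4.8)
The plan is to obtain part~(i) by simply concatenating the two results the corollary is advertised to rest on, and to obtain part~(ii) by contraposition, re-using the witness built in the proof of Corollary~\ref{41}.

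For part~(i): since $F$ is order continuous, Proposition~\ref{21} says that every \textbf{GO} $T:E\to F$ is in fact an \textbf{UGO}; and, as noted in the sentence preceding the corollary, every \textbf{UGO} is an \textbf{DGO}, because a bounded disjoint sequence in $F'$ is $uaw$-null by \cite[Lemma~2]{Z}, hence $uaw^{*}$-null, so that the defining property of an \textbf{UGO} applies to it. Composing these two implications gives~(i); note that $\sigma$-order completeness of $F$ is not used here, and enters only in part~(ii).

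For part~(ii) I would argue the contrapositive. Assume $F$ is not order continuous. Since $F$ is $\sigma$-order complete, \cite[Theorem~4.51]{AB} furnishes a lattice embedding $T:\ell_{\infty}\to F$, and, exactly as in the proof of Corollary~\ref{41}, this $T$ is a \textbf{GO} (using that $\ell_{\infty}$ has the Grothendieck property and that $T'$ is $w^{*}$-to-$w^{*}$ continuous). It then suffices to check that $T$ is not an \textbf{DGO}. Let $(e_n)$ be the standard basis of $\ell_{\infty}$; since $T$ is a lattice homomorphism which is bounded and bounded below, the images $(Te_n)$ form a disjoint sequence in $F$ whose norms are bounded and bounded away from $0$, so \cite[Lemma~3.4]{AEH} yields a bounded disjoint sequence $(g_n)\subseteq F'$ with $g_n(e_n)=1$. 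Restricting $T'(g_n)$ to the copy of $c_0$ inside $\ell_{\infty}$ produces functionals in $c_0'=\ell_1$ of norm at least one, so the Schur property of $\ell_1$ prevents $T'(g_n)$ from being weakly null in ${\ell_{\infty}}'$; hence the bounded disjoint sequence $(g_n)\subseteq F'$ witnesses the failure of the \textbf{DGO} property for $T$, contradicting the hypothesis of~(ii). I do not anticipate a genuine obstacle here: the content of~(ii) is the observation that the sequence which the proof of Corollary~\ref{41} uses to defeat the unbounded Grothendieck condition is already \emph{bounded and disjoint}, so it defeats the disjoint Grothendieck condition as well; the only thing to keep an eye on is that \cite[Lemma~3.4]{AEH} genuinely applies, which it does because the embedding keeps the norms of $(Te_n)$ bounded away from $0$.
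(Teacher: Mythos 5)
Your proof is correct and follows exactly the route the paper intends: part (i) is Proposition \ref{21} composed with the observation that every \textbf{UGO} is a \textbf{DGO} via \cite[Lemma 2]{Z}, and part (ii) reuses the lattice embedding $\ell_\infty\to F$ from the proof of Corollary \ref{41}, noting that its witness sequence $(g_n)$ is already bounded and disjoint. Your fleshing-out of why $T'(g_n)$ is not weakly null (restricting to $c_0$ and invoking the Schur property of $\ell_1$) is a correct expansion of the paper's terse remark that $\ell_1\subseteq {\ell_\infty}'$.
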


\end{document}